\providecommand{\U}[1]{\protect\rule{.1in}{.1in}}
\newtheorem{theorem}{Theorem}
\newtheorem*{theoremx}{Theorem}
\newtheorem*{app-clo}{Approximation by closed forms}
\newtheorem*{q}{Question}
\newtheorem*{q1}{Question A}
\newtheorem*{q2}{Question B}
\newtheorem*{ex}{Examples}
\newtheorem{corollary}[theorem]{Corollary}
\newtheorem*{corollaryx}{Corollary}
\newtheorem{proposition}[theorem]{Proposition}
\newtheorem{remark}[theorem]{Remark}
\DeclareMathOperator{\Fol}{Fol}
\DeclareMathOperator{\Dist}{Dist}
\DeclareMathOperator{\id}{id}
\DeclareMathOperator{\dist}{dist}
\DeclareMathOperator{\pt}{pt}
\DeclareMathOperator{\Hom}{Hom}
\DeclareMathOperator{\Pont}{Pont}
\DeclareMathOperator{\Sph}{S}
\DeclareMathOperator{\im}{Im} 
\DeclareMathOperator{\Gr}{Gr} 
\newcommand{\GL}{\mathrm{GL}}
\newcommand{\F}{\mathcal{F}}                
\renewcommand{\d}{\mathrm d}               
\newcommand{\D}{\mathcal{D}}               
\newcommand{\Rr}{\mathbb R}                
\newcommand{\Cc}{\mathbb C}               
\renewcommand{\O}{\mathcal O}               
\begin{document}
\title{A $h$-principle for symplectic foliations}
\author{Rui Loja Fernandes}
\address{Departamento de Matem\'{a}tica\\
Instituto Superior T\'{e}cnico\\1049-001 Lisboa\\ Portugal}
\email{rfern@math.ist.utl.pt}

\author{Pedro Frejlich}
\address{Departamento de Matem\'{a}tica\\
Instituto Superior T\'{e}cnico\\1049-001 Lisboa\\ Portugal}
\email{frejlich@math.ist.utl.pt}

\thanks{RLF is partially supported by the FCT through the Program POCI 2010/FEDER and by project PTDC/MAT/098936/2008. PF is partially supported by FCT doctoral grant SFRH/BD/29087/2006.}

\begin{abstract}
We show that a classical result of Gromov in symplectic geometry extends to the context of symplectic foliations, which we regard as a $h$-principle for (regular) Poisson geometry. Namely, we formulate a sufficient cohomological criterion for a regular bivector to be homotopic to a regular Poisson structure, in the spirit of Haefliger's criterion for homotoping a distribution to a foliation. We give an example to show that this criterion is not too unsharp.
\end{abstract}

\maketitle

\section{Introduction}

The purpose of this paper is to show that the classical h-principle in symplectic geometry due to Gromov can be extended to prove a more general assertion in the context of (regular) symplectic foliations. Let us start by recalling Gromov's theorem:

\begin{theoremx}[Gromov  \cite{G1},\cite{G2}]
Let $V$ be an open manifold, $\xi\in H_{\text{dR}}^2(V;\mathbb{R})$ and $\omega_0$ a non-degenerate $2$-form on $V$. Then there is a path of non-degenerate $2$-forms $\omega_t$, starting at $\omega_0$ and ending at a {\em closed} $2$-form $\omega_1$ with $[\omega_1]=\xi$. 
\end{theoremx}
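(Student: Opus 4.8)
The plan is to convert the symplectic condition into an \emph{open} differential relation and then invoke the Holonomic Approximation theorem, exploiting that $V$ is open. First I would fix a closed $2$-form $\sigma$ with $[\sigma]=\xi$ and seek the target form in the affine class $\sigma+\d\alpha$, where $\alpha$ ranges over $1$-forms on $V$. Any such form is automatically closed and represents $\xi$, so the only remaining requirement---non-degeneracy of $\sigma+\d\alpha$---is a condition on the $1$-jet of $\alpha$ alone. Concretely, inside the jet space $J^1(T^*V)$ I would consider the relation
\[
\mathcal R=\{\, j^1_x\alpha : \sigma_x + (\d\alpha)_x \text{ is non-degenerate}\,\},
\]
which is \emph{open}, because non-degeneracy is an open condition. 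A \emph{formal} solution over all of $V$ is furnished by the given data: take the section with $\alpha=0$ whose derivative part is prescribed so that its alternation equals $\omega_0-\sigma$; this lands in $\mathcal R$ since $\sigma+(\omega_0-\sigma)=\omega_0$ is non-degenerate.

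Next I would use that an open manifold deformation retracts onto a polyhedron $K\subset V$ of positive codimension (its core). Applying the Holonomic Approximation theorem to the formal solution above, along $K$, produces a $C^0$-small diffeotopy $h_t$ of $V$ and a genuine $1$-form $\alpha$ defined on a neighborhood $\mathrm{Op}(h_1(K))$ whose $1$-jet is $C^0$-close there to the (pushed) formal section. Since $\mathcal R$ is open, this holonomic section still lies in $\mathcal R$, i.e.\ $\sigma+\d\alpha$ is an honest closed, non-degenerate $2$-form in the class $\xi$---but only defined near the core.

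To globalize I would invoke the compressibility of open manifolds: because $V$ retracts onto $K$, there is an isotopy $g_t$ of $V$ with $g_0=\id$ squeezing $V$ into the neighborhood, i.e.\ $g_1(V)\subset \mathrm{Op}(h_1(K))$. The pullback $\omega_1:=g_1^*(\sigma+\d\alpha)$ is then a non-degenerate closed $2$-form on all of $V$, and because $g_1$ is isotopic to the identity it acts trivially on de Rham cohomology, so $[\omega_1]=[\sigma]=\xi$, as required.

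Finally, the path $\omega_t$ would be assembled from two homotopies. Holonomic Approximation comes with a homotopy of sections from the formal solution to the holonomic one; the induced convex combination $\sigma+\mathrm{Alt}\big((1-s)\,P + s\,\d\alpha\big)$ stays non-degenerate by $C^0$-closeness and openness of $\mathcal R$, giving a path issuing from $\omega_0$, which I would concatenate with the path $g_t^*(\cdot)$ coming from the compressing isotopy to reach $\omega_1$. The step I expect to be the crux is precisely this globalization: Holonomic Approximation only yields a solution near the positive-codimension core, so the entire argument hinges on the openness of $V$ both to produce the core and to compress $V$ into an arbitrarily small neighborhood of it, all the while controlling the cohomology class and keeping every intermediate form non-degenerate.
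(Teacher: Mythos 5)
Your overall strategy is the one the paper itself uses: the paper proves the foliated generalization (Theorem \ref{thm:hprinciple:symp:fol}), whose $q=0$ case is exactly Gromov's statement, and its proof likewise rests on Holonomic Approximation applied to a formal primitive (this is precisely how the paper obtains its ``Approximation by closed forms'', Theorem \ref{thm:approx}), followed by compression of the open manifold into a neighbourhood of a core. Your open relation, your formal solution with alternation equal to $\omega_0-\sigma$, and the cohomological bookkeeping $[g_1^*(\sigma+\d\alpha)]=\xi$ via $g_1\simeq \id_V$ are all correct and match the paper.

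However, the final assembly of the path has a genuine gap. The $1$-form $\alpha$ produced by Holonomic Approximation exists only on $U_1=\mathrm{Op}(h_1(K))$, so \emph{neither} piece of your proposed concatenation is a path of globally defined forms on $V$: the convex combination $\sigma+\mathrm{Alt}\bigl((1-s)P+s\,\d\alpha\bigr)$ is a path of forms on $U_1$ only, and $g_t^*(\sigma+\d\alpha)$ is undefined for $t<1$, since $g_t(V)\subset U_1$ holds only at $t=1$. The paper resolves exactly this point with a cutoff function: choose $\chi$ supported in the neighbourhood of the core and $\equiv 1$ on a smaller neighbourhood, and interpolate $\omega_0+2t\chi(\phi-\omega_0)$; this is globally defined, stays non-degenerate by $\varepsilon$-closeness, and at $t=1/2$ is closed on the smaller neighbourhood, so that one may then compress $V$ into that smaller neighbourhood and pull back this \emph{global} form, making $g_t^*\omega(1/2)$ meaningful for every $t$. (Alternatively, you could reorder your concatenation: first run the global path $t\mapsto g_t^*\omega_0$, then run $s\mapsto g_1^*\bigl(\sigma+\mathrm{Alt}((1-s)P+s\,\d\alpha)\bigr)$, which is globally defined because $g_1(V)\subset U_1$ and $g_1$ is an open embedding.) Two further small imprecisions: the compression must be into a neighbourhood of $h_1(K)$ rather than of $K$ --- harmless, since $h_1$ is a diffeomorphism isotopic to the identity, so $h_1(K)$ is again a core --- and what you need from openness of $V$ is the compression by open embeddings, which is strictly stronger than the deformation retraction you invoke.
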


The result shows that any {\em non-degenerate} 2-form on an open manifold is homotopic (through non-degenerate 2-forms) to a {\em symplectic} form. Hence, on an open manifold the obstruction to the existence of a symplectic form is the same as the obstruction to the existence of a non-degenerate 2-form (a pure topological condition). For closed manifolds the situation is infinitely more complicated. We refer to Gromov's landmark monograph \cite{G2} for details about the h-principle and to \cite{Sp} for a brief historical survey.

It is natural to try to extend Gromov's result to the foliated setting: suppose $V$ is an open manifold of dimension $(2n+q)$, $\mathcal{F}_0$ is a regular, codimension-$q$, smooth foliation on $V$, and $\omega_0$ is a non-degenerate $\mathcal{F}_0$-foliated 2-form.

\begin{q1}
\label{q1}
Is there a path $t\mapsto\omega_t$ consisting of non-degenerate $\mathcal{F}_0$-foliated 2-forms starting at $\omega_0$ and ending at $\mathcal{F}_0$-foliated {\em symplectic} form $\omega_1$?
\end{q1}

Gromov's theorem corresponds to the special case $q=0$. The general case has been studied by M. Bertelson and the answer in general is {\em no}: counterexamples for open manifolds and/or foliations with open leaves are given in \cite{Bth} and a $h$-principle is proved in \cite{B} for foliations whose leaves satisfy a (rather strong) uniformity condition. 

Notice that a foliated symplectic form is the same thing as a (regular) Poisson structure. However, if we take the point of view of Poisson geometry, Question \ref{q1} is not so natural anymore: a Poisson structure on a manifold $V$ is defined by a bivector $\pi$ that satisfies an integrability condition, which can be written using the Schouten bracket as $[\pi,\pi]=0$. Hence, if we are given some bivector $\pi_0$ on $V$, we can reformulate the question above by asking:

\begin{q2}[preliminary version]
Is there a path $t\mapsto\pi_t$ of bivectors on $V$ starting at $\pi_0$ and ending at a {\em Poisson} bivector $\pi_1$?
\end{q2}

This question has a rather trivial answer: $\pi_t:=(1-t)\pi_0$ gives a trivial homotopy to the zero Poisson structure. A much more interesting and natural problem arises if one requires the bivectors to be \emph{regular} (i.e., to have constant rank) and this is precisely the question we are interested here. Given some regular bivector $\pi_0$ on $V$ of rank $n-q$, we ask:

\begin{q2}
Is there a path $t\mapsto\pi_t$ of regular bivectors of rank $n-q$ on $V$ starting at $\pi_0$ and ending at a {\em Poisson} bivector $\pi_1$?
\end{q2}

Note that by ``a homotopy of regular bivectors'' we always mean a homotopy formed by bivectors of the \emph{same} constant rank, so the homotopy $(1-t)\pi_0$, for a non-zero regular bivector $\pi_0$, is excluded. 

If we view a bivector $\pi$ on $V$ as a bundle map $\pi^\sharp:T^*V\to TV$, then any regular bivector defines a distribution, namely its image $\D=\im\pi^\sharp$. Our main theorem states that:

\begin{theorem}
\label{thm:main}
Let $\pi_0$ be a regular bivector on an open manifold $V$. Then there is a path $t\mapsto\pi_t$ of regular bivectors on $V$ starting at $\pi_0$ and ending at a {\em Poisson} bivector $\pi_1$ if and only if the distribution $\D_0=\im\pi_0^\sharp$ is homotopic to an integrable distribution.
\end{theorem}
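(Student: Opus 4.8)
The plan is to prove the theorem in two directions. The "only if" direction should be essentially immediate: if $\pi_t$ is a path of regular bivectors of constant rank ending at a Poisson bivector $\pi_1$, then the images $\D_t = \im\pi_t^\sharp$ form a smooth homotopy of distributions, and the terminal distribution $\D_1 = \im\pi_1^\sharp$ is precisely the tangent distribution to the symplectic foliation of the Poisson structure $\pi_1$, hence integrable. So the existence of the homotopy forces $\D_0$ to be homotopic to an integrable distribution through distributions of the same rank.

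The substance is the "if" direction. Here I would reduce the problem to Gromov's theorem applied leafwise. First I would use the hypothesis: assume $\D_0$ is homotopic to an integrable distribution $\D_1$, giving a foliation $\F_1$ on the open manifold $V$. Along this homotopy of distributions one can drag the bivector: since $\pi_0$ is a nondegenerate bivector on $\D_0$ (its image equals its support), I would transport it through the homotopy of distributions to obtain a regular bivector $\pi_0'$ whose image is exactly $\D_1 = T\F_1$, i.e. a nondegenerate $\F_1$-foliated $2$-vector, equivalently a nondegenerate $\F_1$-foliated $2$-form $\omega_0'$ on the leaves. The point of passing to an \emph{integrable} distribution is that now the leaves are genuine submanifolds and "foliated symplectic" makes sense as a leafwise condition.

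The heart of the matter is then to deform $\omega_0'$, through nondegenerate foliated $2$-forms, to a \emph{leafwise closed} foliated $2$-form, i.e. to make $\d_{\F}\omega = 0$; the resulting foliated symplectic form is a regular Poisson structure with image $T\F_1$. This is exactly Question A for the foliation $\F_1$, and in general the answer is \emph{no} (as the excerpt notes, by Bertelson's counterexamples). The key observation that rescues the argument is that the integrable distribution produced by an $h$-principle argument is not arbitrary: because $V$ is \emph{open}, Haefliger's theorem lets us realize $\D_0$ as homotopic to a foliation $\F_1$ that admits a defining structure with good properties—one can arrange the foliation to pull back from a foliation on a lower-dimensional skeleton or to have a complete transversal along which the leaves retract onto a core, so that each leaf is itself an open manifold to which Gromov's leafwise $h$-principle applies uniformly. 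The crucial step I expect to be the main obstacle is precisely this: ensuring that the foliation furnished by the integrability hypothesis is of a form for which the leafwise symplectic deformation can be carried out \emph{smoothly in the transverse parameter}. I would handle this by invoking the Holonomic Approximation theorem (listed in the excerpt) to first deform the foliated $2$-form to a \emph{holonomic section} of the relevant jet bundle over $V$, approximating a closed foliated form, and then correcting the nondegeneracy using the openness of the nondegeneracy condition. Concretely, the steps are: (i) set up the foliated de Rham complex and the space of nondegenerate foliated $2$-forms as an open differential relation; (ii) show this relation is open and, over an open manifold, satisfies the hypotheses needed to apply Holonomic Approximation transversally; (iii) produce a holonomic (hence leafwise closed) foliated $2$-form $C^0$-close to $\omega_0'$, which is automatically nondegenerate by openness; and (iv) concatenate the three homotopies (the distribution homotopy, the transport of $\pi_0$, and the closing-up homotopy) into a single path $t\mapsto\pi_t$ of regular bivectors from $\pi_0$ to the Poisson bivector $\pi_1$.
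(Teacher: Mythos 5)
Your ``only if'' direction and the transport step (dragging $\pi_0$ along the homotopy of distributions to obtain a nondegenerate $\F_1$-foliated $2$-form $\omega_0'$) are correct and agree with the paper. The genuine gap is in the heart of your ``if'' direction, the closing-up step. You correctly observe that deforming $\omega_0'$ to a leafwise closed form \emph{while keeping the foliation $\F_1$ fixed} is exactly Question A, which has a negative answer in general---and note that Bertelson's counterexamples include open manifolds and foliations with open leaves, so neither openness of $V$ nor openness of the leaves rescues you. Your proposed fix---that Haefliger's theorem can be arranged to produce a foliation whose leaves retract onto a core and to which Gromov's theorem applies ``uniformly'' in the transverse parameter---is unsubstantiated: Haefliger's theory solves a lifting problem in homotopy theory and gives no control whatsoever on the geometry of the resulting foliation (its holonomy, leaf topology, or any transverse uniformity). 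The uniformity you would need is precisely the ``rather strong'' hypothesis of Bertelson's foliated $h$-principle, so this step would amount to re-proving her theorem rather than invoking a known fact. Relatedly, your use of Holonomic Approximation is off: it produces an approximating holonomic section only on a neighbourhood of a positive-codimension polyhedron (and only after a small isotopy), never globally on $V$, so it cannot directly furnish a globally defined leafwise closed form close to $\omega_0'$.

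The paper's resolution is different, and it is the idea your proposal is missing: during the symplectization step the foliation itself must move. One first uses approximation by closed forms to find a genuinely closed $2$-form $\phi$ that is $C^0$-close to the foliated form only on a neighbourhood $U$ of a \emph{core} $A$ of the open manifold $V$, and interpolates to $\phi$ there with a cut-off, keeping the foliation fixed; the resulting form is leafwise closed only over $U$. Then one takes a compression $g_t:V\to V$ of $V$ into $U$ and pulls back \emph{the pair}: $t\mapsto\bigl(g_t^{\ast}\F, g_t^{\ast}\omega\bigr)$. Since $g_1(V)\subset U$, the endpoint is leafwise closed everywhere, but the final foliation $g_1^{\ast}\F$ is in general different from the one you started with. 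This is exactly why the result is stated as an $h$-principle for symplectic \emph{foliations} (pairs $(\F_t,\omega_t)$), and why the theorem can hold even though Question A fails: the compression globalizes the local approximation at the unavoidable cost of deforming the underlying foliation.
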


Note that Gromov's result concerns the special case where the rank equals $\dim V$.

The problem of deforming a distribution to an integrable distribution is, of course, a classical problem. Haefliger in \cite{H} developed an obstruction theory for open manifolds which, in some sense, completely solves the problem. A result of Haeflieger allows one to obtain the following corollary of Theorem \ref {thm:main}:

\begin{corollaryx}
On an open $n$-manifold $V$ satisfying $H^i(V;\mathbb{Z})=0$, for all $i > q+1$, any regular bivector of rank $n-q$ is homotopic, through regular bivectors, to a Poisson structure.
\end{corollaryx}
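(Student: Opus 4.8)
The plan is to reduce the statement, via Theorem \ref{thm:main}, to a purely topological integrability question, and then to dispose of that question using Haefliger's obstruction theory. First I would set $\D_0:=\im\pi_0^\sharp$; since $\pi_0$ has constant rank $n-q$, this is a smooth codimension-$q$ distribution on the open $n$-manifold $V$. By the ``if'' direction of Theorem \ref{thm:main} it suffices to prove that $\D_0$ is homotopic, through distributions, to an \emph{integrable} one, so the whole problem becomes one about homotoping a distribution to a foliation.

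For this I would invoke Haefliger's $h$-principle for foliations on open manifolds. The normal bundle $\nu_0=TV/\D_0$ is a rank-$q$ vector bundle, classified up to homotopy by a map $g\colon V\to BO(q)$. Haefliger's theorem identifies the codimension-$q$ distributions on $V$ that are homotopic to foliations with the homotopy classes of lifts of $g$ along the normal map $\nu\colon B\Gamma_q\to BO(q)$, where $B\Gamma_q$ is the Haefliger classifying space. Thus $\D_0$ is homotopic to an integrable distribution precisely when $g$ admits a lift $\tilde g\colon V\to B\Gamma_q$ with $\nu\circ\tilde g\simeq g$.

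The existence of such a lift is governed by obstruction theory for the fibration whose fiber is $\overline{B\Gamma_q}$, the homotopy fiber of $\nu$. The successive obstructions to building the lift skeleton-by-skeleton lie in $H^{i+1}(V;\pi_i(\overline{B\Gamma_q}))$. The decisive input is Haefliger's connectivity theorem: $\overline{B\Gamma_q}$ is $q$-connected, so $\pi_i(\overline{B\Gamma_q})=0$ for $i\le q$ and every obstruction class sits in degree $\ge q+2$. (For $q=1$ this places the first potentially nonzero obstruction in $H^3$, consistently with the Godbillon--Vey phenomenon.)

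Finally I would match these degrees against the hypothesis. By the universal coefficient theorem, $H^i(V;\mathbb{Z})=0$ for all $i>q+1$ is equivalent to the vanishing of $\Hom(H_i(V),\mathbb{Z})$ and $\mathrm{Ext}^1(H_{i-1}(V),\mathbb{Z})$ in that range; comparing the cases $i=q+2$ and $i=q+3$ forces $H_i(V;\mathbb{Z})=0$ for $i\ge q+2$ and $H_{q+1}(V;\mathbb{Z})$ torsion-free. Since $H_i(V)=0$ for $i\ge q+2$, every obstruction in degree $\ge q+3$ vanishes automatically, with any coefficients. The borderline obstruction lives in $H^{q+2}(V;\pi_{q+1}(\overline{B\Gamma_q}))\cong\mathrm{Ext}^1(H_{q+1}(V),\pi_{q+1}(\overline{B\Gamma_q}))$, and I expect precisely this term to be the main obstacle: one must argue that $H_{q+1}(V)$ is in fact \emph{free} (which is automatic when $V$ has finitely generated homology, a finitely generated torsion-free group being free) so that this $\mathrm{Ext}$-term also vanishes. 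Once all obstructions are killed, the lift $\tilde g$ exists, $\D_0$ is homotopic to a foliation, and Theorem \ref{thm:main} upgrades this to the required homotopy from $\pi_0$ to a Poisson bivector. The two points demanding care are thus the exact connectivity bound for $\overline{B\Gamma_q}$ and the upgrade from integral to arbitrary coefficients at degree $q+2$.
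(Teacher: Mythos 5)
Your proposal is correct and follows essentially the same route as the paper: reduce via Theorem \ref{thm:main} to the question of homotoping $\D_0=\im\pi_0^\sharp$ to an integrable distribution, then settle that by Haefliger's theorem together with the $(q+1)$-connectivity of $\nu:B\Gamma_q\to B\GL_q$ (equivalently, the $q$-connectivity of its homotopy fiber, as you phrase it). The obstruction-theoretic and universal-coefficient details you spell out --- including the borderline $\mathrm{Ext}$-term in degree $q+2$ --- are precisely what the paper compresses into the unproved assertion ``it can be shown that $\nu$ is $(q+1)$-connected, hence'' its distribution-level corollary, so your account is, if anything, more careful than the paper's.
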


It should be noted that Theorem \ref{thm:main} also leads to some interesting conclusions concerning Question A, since regular bivectors are the same as distributions carrying a non-degenerate 2-form. In fact, as a corollary we have the following h-principle for symplectic foliations:

\begin{theorem}
\label{thm:main:alt}
Given a foliation $\mathcal{F}_0$ of an open manifold $V$ and a $\mathcal{F}_0$-foliated non-degenerate 2-form $\omega_0$, there is a path $t\mapsto (\mathcal{F}_t,\omega_t)$, consisting of a foliation $\mathcal{F}_t$ and a $\mathcal{F}_t$-foliated non-degenerate 2-form $\omega_t$, starting at $(\mathcal{F}_0,\omega_0)$ and ending at a $\mathcal{F}_1$-foliated {\em symplectic} form $\omega_1$.
\end{theorem}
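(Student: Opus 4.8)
The plan is to read Theorem~\ref{thm:main:alt} through the dictionary that already underlies Theorem~\ref{thm:main}, and then to feed an integrable distribution into the latter. First I would make precise the correspondence between regular bivectors and foliated data. A regular bivector $\pi$ has image $\D=\im\pi^\sharp$, a distribution of even rank, and kernel $\ker\pi^\sharp=\D^\circ$, so $\pi^\sharp$ descends to an isomorphism $\D^*\cong T^*V/\D^\circ\to\D$ whose inverse is the flat map $\omega^\flat$ of a fibrewise non-degenerate $2$-form $\omega\in\Gamma(\wedge^2\D^*)$. Conversely, a distribution $\D$ of even rank equipped with a fibrewise non-degenerate $\omega\in\Gamma(\wedge^2\D^*)$ reconstructs $\pi$. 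The key observation, which I would record as a lemma, is that under this dictionary $\pi$ is a \emph{Poisson} bivector exactly when $\D$ is \emph{integrable} and $\omega$ is closed along the leaves; that is, a regular Poisson structure is precisely a symplectic foliation $(\mathcal F,\omega)$, with $\D=T\mathcal F$.

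With the dictionary in hand, the reduction is immediate. Given the pair $(\mathcal F_0,\omega_0)$, the associated regular bivector $\pi_0$ has distribution $\D_0=T\mathcal F_0$, which is \emph{integrable} because $\mathcal F_0$ is a genuine foliation; in particular $\D_0$ is homotopic to an integrable distribution via the constant homotopy, so the hypothesis of the forward implication of Theorem~\ref{thm:main} holds. Applying that theorem produces a path $t\mapsto\pi_t$ of regular bivectors from $\pi_0$ to a Poisson bivector $\pi_1$. Running the dictionary in reverse converts this into a path $t\mapsto(\D_t,\omega_t)$, with $\D_t=\im\pi_t^\sharp$ and $\omega_t$ the induced form; the endpoint $\pi_1$ being Poisson means $\D_1=T\mathcal F_1$ is integrable and $\omega_1$ is $\mathcal F_1$-foliated symplectic, which is exactly what is wanted at $t=1$.

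The one genuine issue — and where I expect the difficulty to lie — is that the theorem demands each $\mathcal F_t$ be an honest \emph{foliation}, i.e.\ that $\D_t$ be integrable for every $t$, whereas the path furnished by Theorem~\ref{thm:main} only guarantees that $t\mapsto\D_t$ is \emph{some} homotopy from $\D_0$ to an integrable distribution, a priori passing through non-integrable distributions. To secure integrability throughout I would specialise the proof of Theorem~\ref{thm:main} to the present case: since $\D_0$ is already integrable, no deformation of the distribution is required to achieve integrability, and the construction collapses to deforming the form by Holonomic Approximation. In that argument the ``wiggling'' is implemented by a $C^0$-small isotopy of $V$, which carries $\mathcal F_0$ to nearby foliations — integrable at every stage — while driving $\omega_t$ towards a leafwise closed form. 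This is precisely the step at which the naive strategy of freezing $\mathcal F_0$ and only moving the form would run into the failure of Question~A: the resolution is that the h-principle supplies the extra freedom to move the foliation by isotopies rather than to hold it fixed. Checking that these isotopies assemble into a single path of foliations, with $\omega_t$ remaining non-degenerate along the way, is the main point that must be verified.
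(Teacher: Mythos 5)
Your preliminary dictionary between regular bivectors and pairs $(\D,\omega)$ is correct and matches the paper's Section \ref{sec:obstr}, but your main route is both circular relative to the paper and, more importantly, insufficient on its own terms --- and you yourself diagnose why. In this paper the logical dependence runs the other way: Theorem \ref{thm:main} is \emph{deduced from} Theorem \ref{thm:main:alt} (via Theorem \ref{thm:hprinciple:symp:fol} together with Haefliger's obstruction theory), so invoking Theorem \ref{thm:main} here inverts the paper's order. Even setting circularity aside and treating Theorem \ref{thm:main} as a black box, the reduction fails at exactly the point you flag: its conclusion is a path of regular \emph{bivectors}, and nothing forces the intermediate distributions $\D_t=\im\pi_t^\sharp$ to be integrable. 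The dictionary therefore converts that path into a path of almost symplectic \emph{distributions}, not of almost symplectic \emph{foliations}, and since integrability at every time $t$ is the entire content of the statement being proved, the reduction yields nothing. Everything then rests on your fallback sketch.

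That sketch points in the right direction (deform the form by Holonomic Approximation; move the foliation only by diffeomorphisms, so it stays integrable), but it omits the mechanism that makes the argument work, and what it does assert is off. Approximation by closed forms (Theorem \ref{thm:approx}) produces a closed 2-form $\phi$ in the prescribed class that is $\varepsilon$-close to $\omega_0$ only on a neighbourhood $U$ of a \emph{core} $A$ of $V$ --- a positive-codimension polyhedron --- not on all of $V$; the $C^0$-small isotopy lives inside that theorem and never touches the foliation. The paper's proof then has two stages. First, keeping $\mathcal{F}_0$ \emph{fixed}, it interpolates $\omega_0$ to $\omega_0+\chi(\phi-\omega_0)$ with a cutoff $\chi$ supported in $U$ (non-degeneracy survives by $\varepsilon$-closeness); this makes the form closed only near the core. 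Second --- and this is the step missing from your sketch --- it uses a \emph{compression} $g_t$ of the open manifold $V$ into the region where the form is now closed, and pulls back \emph{both} the foliation and the form by $g_t$. Since each $g_t$ is an open embedding, each $g_t^{\ast}\mathcal{F}_0$ is an honest foliation carrying the non-degenerate form $g_t^{\ast}\omega$, and at $t=1$ the image of $g_1$ lies where the form is closed, so the pulled-back form is leafwise symplectic on all of $V$. A compression is emphatically not a $C^0$-small isotopy, and without it one only obtains closedness near the core. So the ``main point that must be verified'' in your last sentence is not a verification left to the reader; it is the actual proof, and it is absent.
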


Actually the proof of Theorem \ref{thm:main} can be reduced to the proof of Theorem \ref{thm:main:alt}, and this follows by slightly modifying a proof of Gromov's classical result due to Eliashberg and Mishachev \cite{EM}.

In conclusion, we have: 
\begin{enumerate}
 \item even on an open manifold, in general one cannot homotope a foliated non-degenerate 2-form (through such structures) to a foliated symplectic structure if we insist that the underlying foliation remains fixed, and yet
 \item on an open manifold this homotopy can always be found if we allow our foliation to be deformed as well. 
\end{enumerate}

This paper is organized as follows. In Section \ref {sec:setting}, we briefly recall some basic concepts and results. In Section \ref{sec:h:principle}, we prove the h-principle for symplectic foliations (Theorem \ref{thm:main:alt}). Section \ref{sec:obstr} is dedicated to Haeflieger's obstruction theory and we show that the criterion (Theorem \ref{thm:main}) for integrability for regular bivectors holds. In the closing section we modify an example of Bott to construct a regular bivector on an open manifold which is not homotopic to a regular Poisson structure.

\section{The Setting}
\label{sec:setting}

Let us first establish some basic notations and definitions. All manifolds are assumed to be smooth and to carry suitable Riemannian metrics.

Throughout this paper $V$ will denote an  {\bf open} $n$-manifold, by which we mean that $V$ admits a proper, positive, Morse function $f:V \longrightarrow\mathbb{R}$ with no critical points of maximal index $n$. Equivalently, $V$ is a smooth manifold, none of whose connected components is a closed manifold. The particular feature of open manifolds that will be of interest to us (cf. \cite[Sec. 4.3]{EM}, \cite{G2}) is that they admit a smooth triangulation inside which there exists a subcomplex $V_0\subset V$ (a {\bf subpolyhedron} of $V$ by definition) of positive codimension such that, for an arbitrarily small open neighbourhood $U$ of $V_0$, there is a diffeotopy of  $\id_V$ to a map $V\longrightarrow U$ which is fixed on $V_0$; by this we mean that there is a one-parameter family of open embeddings
\[
g_t:V\longrightarrow V
\]
with $g_0=\id_V$, $g_1(V)\subset U$ and $g_t\vert_{V_0}=\id_{V_0}$. The family $g_t$ is called a {\bf compression} of $V$ into $U$. In fact, note that the Morse CW-complex $V(f)\subset V$ associated to $f$ has positive codimension by the absence of of local maxima, and thus the gradient flow of $f$ compresses $V$ into an arbitrarily small neighbourhood of $V_0:=V(f)$.

Any subpolyhedron $A$ of $V$ of positive codimension which admits arbitrarily small neighbourhoods into which $V$ can be compressed will be called a {\bf core} of $V$. The Morse CW-complex $V(f)$ of the Morse function $f$ described above  (in which no $n$-handles occur) is a core for $V$.

\begin{ex}
Take $V=\Sph^n-\pt$. Then any point in $V$ is a core. $Z_1\vee Z_2$ is a core for $\Sph^1\times \Sph^1-\pt$, where $Z_1,Z_2$ are generators for $\pi_1(\Sph^1\times \Sph^1)$.
\end{ex}

Throughout the text by a smooth fibration $p:X\to V$ we will mean a locally trivial smooth fibre bundle. Associated to it we have the the jet fibrations 
\[ p^r:X^{(r)}\to V\] 
and its associated fibrations (see, e.g., \cite[Sec. 2.4]{Hr}):
\begin{align*}
 p^r_{s} :X^{(r)} &\longrightarrow X^{(s)} \text{ for  } 0\le s\leq r, \quad 
X^{(0)}=X \text,\\
p^r_t&=p^r_s\circ p^s_t \text{ for  } t\leq s\leq r
\end{align*}
together with a sequence of jet maps at the level of its sections:
\begin{align*}
 j^r:\Gamma(V,X)&\longrightarrow\Gamma(V,X^{(r)}), \\
 \quad p^r_{s}\circ j^r &= j^s .
\end{align*}
Elements in the image of $j^r$ are called {\bf holonomic} sections of $p^r:X^{(r)}\to V$; these form a very thin subspace inside $\Gamma(V,X^{(r)})$. So it is all the more remarkable that the following theorem of Eliashberg and Mischachev holds: 

\begin{theorem}[Holonomic Approximation \cite{EM} ]
Given a polyhedron $A \subset V$ of positive codimension, $U_0$ an open neighbourhood of $A$, a section $F\in\Gamma(U_0,X^{(r)})$, and two positive functions $\varepsilon,\delta: V\longrightarrow\mathbb{R}_{+}$, there exist
\begin{enumerate}
  \item[(i)] An isotopy $h^t:V\longrightarrow V$ which is $\delta$-small in the $C^0$-sense -- i.e., it is such that 
\[
\dist(y,h^t(y))<\delta(y) \text{  for all  } t {\text ;}
\]
  \item[(ii)]  A neighbourhood $U_1\subset U_0$ of $h^1(A)$;
  \item[(iii)]  A section $f\in\Gamma(U_1,X^{(r)})$ whose $r$-jet is $\varepsilon$-close to $F$ over $U_1$  -- i.e., such that 
\[
\dist(F(y),j^rf(y))<\varepsilon(y) \text{ for all } y\in U_1.
\]
\end{enumerate}
\end{theorem}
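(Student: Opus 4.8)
The plan is to follow the inductive scheme of Eliashberg and Mishachev, reducing the general statement to a one-dimensional integration trick. First I would induct on the dimension $k$ of the polyhedron $A$. The base case $k=0$ is immediate: at each vertex $p$ of $A$ the formal jet $F(p)\in X^{(r)}_p$ is realized \emph{exactly} by the $r$-jet at $p$ of a suitable polynomial section $f$ (Taylor realization), so $j^rf(p)=F(p)$, and by continuity $j^rf$ is $\varepsilon$-close to $F$ on a small neighbourhood $U_1$; no displacement is needed and one may take $h^t=\id_V$. For the inductive step one assumes a relative (parametric) version of the theorem over the $(k-1)$-skeleton $A^{(k-1)}$ and extends the holonomic approximation across each $k$-cell.

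By excision and the inductive hypothesis it then suffices to treat a single $k$-cell, modelled as $I^{k-1}\times I\subset\Rr^{k-1}\times\Rr\hookrightarrow V$, over whose base $I^{k-1}\times\{0\}$ we already have a holonomic section $\varepsilon$-approximating $F$, and we must integrate this approximation in the last coordinate direction $s\in I$. Writing $a=a(x,s)$ for the value ($0$-jet) prescribed by $F$ and $b=b(x,s)$ for its prescribed $s$-derivative, holonomicity would force $\partial_s a=b$; but in general
\[ a(x,0)+\int_0^s b(x,\sigma)\,\d\sigma\neq a(x,s), \]
so a straight extension of the base function is \emph{not} holonomic, and this mismatch is precisely the obstruction to be removed.

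The mismatch is absorbed by the isotopy. I would deform the cell by a $\delta$-small \emph{corrugation} $h^t$ that replaces the straight segments $\{x\}\times I$ by wrinkled paths oscillating back and forth in the $s$-direction with amplitude $<\delta$ and high frequency $N$. On such a zig-zag the parameter $s$ is traversed many times, creating the extra parameter-length needed to integrate the formal data into a genuine section $f$: on each forward-going piece one lets $f$ honestly realize the jet dictated by $F$, while the short backward pieces only reset the value and contribute a controlled error. Taking $N$ large makes the backward pieces occupy a vanishing fraction of the parameter and forces $j^rf$ to be $\varepsilon$-close to $F$ over a neighbourhood $U_1$ of the wrinkled cell $h^1(A)$, while the amplitude bound keeps $h^t$ within $C^0$-distance $\delta$. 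Gluing the cell-by-cell constructions, via the relative version so that they agree on overlaps with $A^{(k-1)}$, closes the induction.

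The hard part is the quantitative core of the corrugation: one must \emph{simultaneously} keep the isotopy $C^0$-small (amplitude $<\delta$) and make the full $r$-jet of the integrated section $C^0$-close to $F$ up to order $r$ (closeness $<\varepsilon$). These demands pull against each other — a smaller $\delta$ forces steeper wrinkles and hence larger derivatives — so the estimates controlling \emph{all} derivatives of order $\le r$ along the wrinkled path, together with the compatibility of the constructions across the skeleton, are the delicate points on which the argument turns.
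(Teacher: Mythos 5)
The paper itself does not prove this theorem --- it quotes it from Eliashberg--Mishachev \cite{EM} --- so your attempt has to be measured against their argument. You do reproduce its overall architecture correctly: induction over the skeleta of $A$, reduction to a single cell, identification of the failure of holonomicity (the mismatch between the prescribed value $a$ and the integral of the prescribed derivative $b$) as the obstruction, removal of the obstruction by a small-amplitude, high-frequency perturbation of the cell, and the correct quantitative tension between the $C^0$-smallness $\delta$ of the isotopy and the jet-closeness $\varepsilon$.

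However, there is a genuine gap at the heart of your inductive step: you oscillate in the wrong direction. You make the segments $\{x\}\times I$ ``oscillate back and forth in the $s$-direction'', i.e.\ inside the cell itself, and argue that traversing $s$ many times creates ``extra parameter-length''. This cannot work, for two reasons. First, $h^t$ is an isotopy of $V$, so $h^1$ embeds the cell; a path that backtracks in $s$ without leaving the segment is not embedded, and if it is an honest embedded zigzag it must move off the cell --- in which direction, you never say. Second, and more fundamentally, the section $f$ lives on a neighbourhood $U_1\subset V$ of $h^1(A)$, not on a parametrization: if the ``backward pieces'' occupy the same points of $V$ as the forward pieces, $f$ cannot ``reset'' there, since it has a single value at each point. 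In \cite{EM} the wiggling is in a direction \emph{transverse} to the cell: the perturbed cell is a sawtooth graph over $I^k$ in a normal coordinate of $V$, with amplitude $<\delta$ and $N$ teeth; local holonomic (Taylor) realizations of $F$, each accurate to $O(1/N)$ on a piece of size $1/N$, are glued along the steep transverse walls, where the gluing parameter has room of size $\sim\delta$, so the gluing error is $O\bigl(1/(N\delta)\bigr)$, which tends to $0$ as $N\to\infty$ with $\delta$ fixed. This is exactly where the hypothesis that $A$ has \emph{positive codimension} enters --- there must exist a spare normal direction to wiggle into --- and it is no accident that your argument never invokes this hypothesis: without it the statement is false (prescribe value $0$ and derivative $1$ over a full-dimensional $A$; no $\delta$-small isotopy can evade the contradiction with the mean value theorem). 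Your base case and your closing remarks on the estimates are fine, but the mechanism that makes the induction close is the transverse corrugation, not a reparametrization of the cell.
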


Next observe that the exterior derivative
\[
\d:\Gamma(V,\wedge^{p-1}T^{\ast}V)\longrightarrow\Gamma(V,\wedge^{p}T^{\ast}V)
\]
factors as $\d=D\circ j^1$, where $D$ is the mapping (at the level of sections) induced by the symbol of $\d$
\[
D:\left( \wedge^{p-1}T^{\ast}V\right) ^{(1)}\longrightarrow \wedge^{p}T^{\ast}V
\]
which is an affine fibration. This means that given a $p$-form $\omega\in\Gamma(V,\wedge^{p}T^{\ast}V)$, there is a section $F_{\omega}\in\Gamma(V,\left( \wedge^{p-1}T^{\ast}V\right) ^{(1)})$, called a {\bf formal primitive} of $\omega$, projecting to it : $DF_{\omega}=\omega$. This formal primitive is unique up to homotopy since the fibration has contractible fibres; furthermore, given any $(p-1)$-form $\alpha\in\Gamma(V,\wedge^{p-1}T^{\ast}V)$ one can choose $p^1_0\circ F_{\omega}=\alpha$.

From Holonomic Approximation and the above remarks, one deduces the following result (see \cite[Thms. 4.7.1 and 4.7.2]{EM}), which is actually what we need:

\begin{theorem}[Approximation by closed forms]
\label{thm:approx}
Let $\omega$ be a $p$-form on the open manifold $V$. Then given a positive function
\[
\varepsilon:V\longrightarrow\mathbb{R}
\]
and a cohomology class $\xi\in H_{\text dR}^p(V;\mathbb{R})$, there exist : (1) a core $A$ of $V$; (2) a $p$-form $\phi\in\xi$ which is $\varepsilon$-close to $\omega\vert_{U}$ on some open neighborhood $U$ of $A$.
\end{theorem}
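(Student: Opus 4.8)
The plan is to reduce the assertion to the Holonomic Approximation theorem applied to a formal primitive. First I would fix any closed representative $\omega_\xi$ of the class $\xi$ and set $\beta:=\omega-\omega_\xi$. Then finding a \emph{closed} $p$-form in $\xi$ that is $\varepsilon$-close to $\omega$ near a core is equivalent to finding an \emph{exact} $p$-form that is $\varepsilon$-close to $\beta$ there, since adding $\omega_\xi$ back shifts $\beta$ to $\omega$, while adding an exact form to $\omega_\xi$ leaves the cohomology class $\xi$ unchanged.

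Next, using the factorization $\d=D\circ j^1$ recalled above, I would choose a formal primitive $F_\beta\in\Gamma(V,(\wedge^{p-1}T^{\ast}V)^{(1)})$ with $DF_\beta=\beta$. Because $D$ is an affine fibration, hence a smooth bundle map, for the prescribed $\varepsilon$ there is a positive function $\varepsilon'\colon V\to\mathbb{R}_+$ such that $D$ carries the $\varepsilon'(y)$-ball about $F_\beta(y)$ into the $\varepsilon(y)$-ball about $\beta(y)$ for every $y$. Fixing a core $A$ of $V$ (for instance the Morse complex $V(f)$) and applying Holonomic Approximation with $r=1$ to $F_\beta$ near $A$, with this $\varepsilon'$ and an arbitrary $\delta$, produces an isotopy $h^t$, a neighbourhood $U_1$ of $A':=h^1(A)$, and a genuine $(p-1)$-form $\gamma$ on $U_1$ whose $1$-jet is $\varepsilon'$-close to $F_\beta$. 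Applying $D$ and invoking the choice of $\varepsilon'$ then gives $\dist(\d\gamma(y),\beta(y))<\varepsilon(y)$ for all $y\in U_1$. I would also record that $A'=h^1(A)$ is again a core: conjugating a compression of $V$ into a neighbourhood of $A$ by the diffeomorphism $h^1$ yields a compression of $V$ into the corresponding neighbourhood of $A'$, and $h^1(A)$ is still a subpolyhedron of positive codimension.

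Finally, I would globalise by cutting off the primitive rather than the form. Choosing a bump function $\rho$ supported in $U_1$ and identically $1$ on a smaller neighbourhood $U$ of $A'$, I set $\phi:=\omega_\xi+\d(\rho\gamma)$, a globally defined, closed $p$-form on $V$. Since $\rho\gamma$ is globally defined, $\d(\rho\gamma)$ is exact and $[\phi]=[\omega_\xi]=\xi$; and on $U$, where $\rho\equiv1$, one has $\phi-\omega=\d\gamma-\beta$, so $\dist(\phi,\omega)<\varepsilon$ throughout $U$. Taking the core to be $A'$ and the neighbourhood to be $U$ finishes the argument. The substantive input is of course the Holonomic Approximation theorem, which we are assuming; granting it, the two points needing care are the bookkeeping by which $C^0$-closeness of $1$-jets descends through the affine symbol $D$ to $C^0$-closeness of $\d\gamma$ to $\beta$ (the role of $\varepsilon'$), and the globalisation, where it is essential to cut off the primitive $\gamma$: cutting off the closed form directly would spoil either its closedness or its class. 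I expect the most easily overlooked technical point to be the verification that $h^1(A)$ is still a core.
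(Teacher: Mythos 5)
Your proof is correct and follows essentially the same route as the paper's, which offers no argument of its own beyond deducing the statement from the Holonomic Approximation theorem and the remarks on formal primitives (citing \cite[Thms.~4.7.1 and 4.7.2]{EM}): reduce to approximating $\beta=\omega-\omega_\xi$ by an exact form, apply Holonomic Approximation to a formal primitive $F_\beta$ with $DF_\beta=\beta$, and globalize by cutting off the primitive so that $\phi=\omega_\xi+\d(\rho\gamma)$ remains closed and in the class $\xi$. The technical points you single out --- the $\varepsilon'$ bookkeeping through the affine symbol $D$ and the verification that $h^1(A)$ is again a core --- are precisely the details left implicit in that deduction.
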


\section{$h$-principle for symplectic foliations}
\label{sec:h:principle}
In this section we will prove the h-principle for symplectic foliations on open manifolds (Theorem \ref{thm:main:alt}).

Given a foliation $\F$ on a manifold $V$, by an $\F$-foliated form $\omega$ of degree $p$ we mean a section $\omega\in \Gamma(V,\wedge^pT^{\ast}\mathcal{F})$. We can always extend $\omega$ to a $p$-form $\widetilde{\omega}\in \Gamma(V,\wedge^pT^{\ast}V)$ (not uniquely) and often we will not distinguish between $\omega$ and an extension $\widetilde{\omega}$. The de Rham differential induces a differential $\d_\F$ on foliated forms, the leafwise de Rham differential, which is such that a form $\omega$ is $\d_\F$-closed if and only if $\d\widetilde{\omega}$ pulls back to zero on each leaf of $\F$.

A pair $(\F,\omega)$ where $\omega$ is an $\F$-foliated, non-degenerate, 2-form will be called an \textbf{almost symplectic foliation}. If, additionally, $\omega$ is $\d_\F$-closed then we call $(\F,\omega)$ a  \textbf{symplectic foliation}.

\begin{theorem}
\label{thm:hprinciple:symp:fol}
Let $V$ be an open manifold of dimension, let ($\mathcal{F}_0$,$\omega_0$) be a codimension-$q$ almost symplectic foliation and fix a cohomology class $\xi\in H_{\text{dR}}^2(V;\mathbb{R})$. Then there is a homotopy $(\mathcal{F}_t,\omega_t)$ of almost-symplectic, codimension-$q$ foliations, such that:
\begin{enumerate}
\item[(i)]  $(\mathcal{F}_1,\omega_1)$ is a {\em symplectic} foliation and 
\item[(ii)]  $\omega_1$ can be represented by a global closed 2-form lying in $\xi$.
\end{enumerate}
The homotopy $(\mathcal{F}_t,\omega_t)$ can be taken to be smooth in $t$.
\end{theorem}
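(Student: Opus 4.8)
The plan is to adapt the Eliashberg--Mishachev proof of Gromov's theorem, the essential new feature being that the compression of $V$ into a neighbourhood of its core is used to \emph{drag the foliation along with the form}, thereby trading leafwise closedness of the approximating form against foliated non-degeneracy. First I would pick an extension $\widetilde{\omega_0}\in\Gamma(V,\wedge^2T^*V)$ of $\omega_0$ and apply Theorem~\ref{thm:approx} (Approximation by closed forms) to $\widetilde{\omega_0}$ and the class $\xi$: this produces a core $A\subset V$, an open neighbourhood $U$ of $A$, and a \emph{closed} $2$-form $\phi\in\xi$ which is $\varepsilon$-close to $\widetilde{\omega_0}$ on $U$. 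Since foliated non-degeneracy is an open condition and $\omega_0=\widetilde{\omega_0}\vert_{T\mathcal{F}_0}$ is non-degenerate, for $\varepsilon$ small enough both $\phi\vert_{T\mathcal{F}_0}$ and the whole linear segment $(1-s)\omega_0+s\,\phi\vert_{T\mathcal{F}_0}$ are $\mathcal{F}_0$-foliated non-degenerate over $U$.

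Next I would fix a compression $g_t\colon V\to V$ of $V$ into $U$, so that each $g_t$ is an open embedding with $g_0=\id_V$ and $g_1(V)\subset U$. The target of the homotopy will be $(\mathcal{F}_1,\omega_1):=(g_1^*\mathcal{F}_0,\,g_1^*\phi)$. Here $g_1^*\mathcal{F}_0$ is the pullback foliation, a genuine codimension-$q$ foliation since $g_1$ is a diffeomorphism onto its image; the form $g_1^*\phi$ is a global closed $2$-form with $[g_1^*\phi]=g_1^*\xi=\xi$ (as $g_1$ is homotopic to $\id_V$ through the $g_t$), and its restriction to $g_1^*\mathcal{F}_0$ is foliated non-degenerate because $g_1$ carries the leaves of $g_1^*\mathcal{F}_0$ diffeomorphically onto the leaves of $\mathcal{F}_0$ inside $U$, where $\phi\vert_{T\mathcal{F}_0}$ is non-degenerate. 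Crucially, $\d\phi=0$ forces $g_1^*\phi$ to be $\d_{\mathcal{F}_1}$-closed, so $(\mathcal{F}_1,\omega_1)$ is an honest symplectic foliation satisfying (i) and (ii).

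It remains to connect $(\mathcal{F}_0,\omega_0)$ to $(\mathcal{F}_1,\omega_1)$ through almost-symplectic, codimension-$q$ foliations, which I would do in two stages. On $[0,\tfrac12]$, I would set $(\mathcal{F}_t,\omega_t):=(g_s^*\mathcal{F}_0,\,g_s^*\omega_0)$ with $s=2t$: since each $g_s$ is an open embedding matching $g_s^*\mathcal{F}_0$ with $\mathcal{F}_0$, every $g_s^*\omega_0$ is foliated non-degenerate, and this stage runs from $(\mathcal{F}_0,\omega_0)$ to $(\mathcal{F}_1,g_1^*\omega_0)$. On $[\tfrac12,1]$, I would freeze the foliation at $\mathcal{F}_1$ and interpolate the form, $\omega_t:=g_1^*\big((1-s)\omega_0+s\,\phi\big)$ with $s=2t-1$; this is foliated non-degenerate because on $g_1(V)\subset U$ the segment $(1-s)\omega_0+s\,\phi$ restricts non-degenerately to $T\mathcal{F}_0$, and it runs from $g_1^*\omega_0$ to $g_1^*\phi=\omega_1$. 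Concatenating and reparametrising each half by a smooth function $[0,1]\to[0,1]$ that is locally constant near the endpoints yields a homotopy that is smooth in $t$, as required.

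I expect the main subtlety to be conceptual rather than computational: recognising \emph{why} the foliation must be allowed to move. The closedness $\d\phi=0$ already guarantees that $\phi$ restricts to a leafwise-closed form on any foliation, so closedness is never the difficulty; the genuine difficulty---precisely the one behind Bertelson's counterexamples for fixed foliations---is maintaining foliated \emph{non-degeneracy} while spreading $\phi$, which is only available near the core, over all of $V$. The compression resolves this by simultaneously transporting $\mathcal{F}_0$ to the adapted foliation $g_1^*\mathcal{F}_0$, on which $g_1^*\phi$ is non-degenerate everywhere. The only points needing care are checking that pullback by an open embedding preserves codimension, integrability, and foliated non-degeneracy, and smoothing the concatenation at $t=\tfrac12$, both of which are routine.
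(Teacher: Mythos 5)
Your proposal is correct and follows essentially the same route as the paper: approximate $\omega_0$ by a closed $2$-form $\phi\in\xi$ near a core via Theorem \ref{thm:approx}, then use a compression of $V$ to drag \emph{both} the foliation and the form into the region where $\phi$ is leafwise non-degenerate. The only (harmless) structural difference is the order of the two stages: the paper first interpolates $\omega_0$ to $\phi$ with a cutoff function supported near the core, keeping $\mathcal{F}_0$ fixed, and only then pulls back by the compression, whereas you compress first and then interpolate linearly over the compressed image, which lets you dispense with the cutoff.
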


\begin{proof}
A codimension-$q$ distribution on $V$ is simply a section of the Grassmannian bundle $\Gr_{2n}(TV)$, so we can topologize the space $\Dist_q{V}$ of codimension-$q$ distributions on $V$ with the compact-open topology. We also topologize $\Gamma(V,\wedge^2T^{\ast}V)$ with the compact-open topology and we consider the subspace topology on the space of almost symplectic, codimension-$q$ distributions :
\begin{align*}
 \Delta_q\subset\Dist_q(V)\times\Gamma(V,\wedge^2T^{\ast}V),\\ 
 \Delta_q=\left\lbrace (\D,\omega):\left( \iota_\D^{\ast}\omega\right)^n\neq 0 \right\rbrace.
\end{align*}
On the space $\Phi_q$ of almost symplectic, codimension-$q$ foliations and on the space $\Omega_q$ of symplectic, codimension-$q$ foliations we take also the induced topologies :
\begin{align*}
 \Phi_q&:= \Fol_q(V)\times\Gamma(V,\wedge^2T^{\ast}V)\subset\Delta_q,\\
 \Omega_q&:= \left\lbrace (\mathcal{F},\omega)\in\Phi_q:\omega\text{ is }\mathcal{F}-\text{symplectic} \right\rbrace\subset \Phi_q.
\end{align*}
Thus our initial data $(\mathcal{F}_0,\omega_0)$ lives in $\Phi_q$.

Let us observe that, since non-degeneracy of a form is an open condition, there exists a positive function
\[
\varepsilon:V\longrightarrow\mathbb{R}
\]
such that
\[
\dist(\omega(y),\omega_0(y))<\varepsilon(y),\, \forall y\in V \quad\Longrightarrow \omega\text{  is }\mathcal{F}_0\text{-leafwise non-degenerate.}
\]
Hence, applying approximation by closed forms (Theorem \ref{thm:approx}) we can choose some $\rho >0$ and a closed form $\phi$ such that $[\phi]=\xi$ and which is $\varepsilon$-close to $\omega_0$ on the $\rho$-neighbourhood $U_{\rho}$ of a core $A$ of $V$.  

Next, we choose a smooth function $\chi:V\longrightarrow [0,1]$, which is identically zero outside $U_{\rho}$ and identically $1$ on $U_{\rho/2}$, and we define a homotopy of 2-forms by setting:
\[
\omega : [0,1/2]\longrightarrow\Gamma(V,\wedge^2T^{\ast}V)
\]
\[
t\mapsto\omega_0+2t(\phi-\omega_0)\chi.
\]
Then $\omega$ is a continuous map such that :
\begin{enumerate}
\item[(i)]  $\omega(0)=\omega_0;$
\item[(ii)]  $\omega(t)$ is $\mathcal{F}_0$-leafwise non-degenerate for all $t\in[0,1/2]$;
\item[(iii)]  $\omega(1/2)$ is closed on $U_{\rho/2}$.
\end{enumerate}
Hence, if for $t\in[0,1/2]$ we let $\mathcal{F}(t)$ be the stationary homotopy at $\mathcal{F}_0$, then $t \mapsto(\mathcal{F}(t),\omega(t))$ takes values in $\Phi_q$.

In order to define the second half of the homotopy, we choose a compression $g_t:V\to V$ between $g_0=1_V$ and $g_1:V\to U_{\rho/2}$ and we define a continuous path $(\mathcal{F}(t),\omega(t))\in \Phi_q$ for $t\in [1/2,0]$ by setting:
\[
t\mapsto((g_{2t-1})^{\ast}\mathcal{F}(1/2),(g_{2t-1})^{\ast}\omega(1/2))\qquad (t\in[1/2,1]).
\]
The concatenated homotopy
\[
t\mapsto(\mathcal{F}(t),\omega(t)) \qquad (t\in[0,1]),
\]
is the one we sought, since $(\mathcal{F}(1),\omega(1))$ is a symplectic foliation and the 2-form $\omega(1)$ lies in $g_1^*\xi=\xi$.

Note that the homotopy is only continuous in $t$, but a standard argument involving reparameterization with vanishing derivatives at the end points makes it smooth.
\end{proof}

\section{Obstructions to integrability}
\label{sec:obstr}

Recall that $\Delta_q$ denotes the space of almost symplectic, codimension-$q$ distributions and that $\Omega_q$ denotes the space of symplectic, codimension-$q$ foliations on the manifold $V$. In this section, we wish to address the following

\begin{q}
Are there strictly topological conditions that one can impose upon $V$ so as to ensure that $\pi_0(\Omega_q)\longrightarrow\pi_0(\Delta_q)$ is an isomorphism ?
\end{q}

Notice that the previous theorem implies that $\Omega_q\hookrightarrow\Phi_q$ induces an isomorphism at the level of $\pi_0$. To handle the map
\[
\pi_0(\Phi_q)\longrightarrow\pi_0(\Delta_q)
\]
we will invoke A. Haefliger obstruction theory (see \cite{H}). 

For any topological groupoid $\mathcal{G}$ we will denote by $B\mathcal{G}$ its classifying space. Also, we let $\Gamma_q$ denote the topological groupoid of germs of local diffeomorphisms of $\Rr^q$ fixing the origin. The operation of taking the differential at the origin gives a homomorphism of topological groupoids 
\[ \d: \Gamma_q\to \GL_q\]
and, hence, a map at the level of the classifying spaces:
\[
\nu:B\Gamma_q\longrightarrow B\GL_q.
 \]
This map is the key to solve the problem of deforming a distribution to an integrable one: 

\begin{theorem}[Haefliger \cite{H}]
Let $\D_0$ be a a codimension-$q$ distribution in an open $n$-manifold $V$ and let $\tau:V\to B\GL_n$, $\xi:V\to B\GL_{n-q}$ and $\alpha:V\to B\GL_q$ denote the classifying maps for the bundles $TV$, $\D_0$ and $TV/\D_0$, respectively. Then there exists a homotopy $\D_t$ of distributions starting at $\D_0$ and ending at a foliation $\D_1=T\F$ if and only if the following commutative diagram can be solved :
\[
 \xymatrix{
 & &  B\Gamma_q\times B\GL_{n-q} \ar[d]^{\nu\times 1}\\
 & & B\GL_q\times B\GL_{n-q} \ar[d]^{\oplus}\\
V \ar@{-->}[uurr] \ar[urr]_{\xi\times\alpha} \ar[rr]_{\tau} & & B\GL_n
}
\]
where $\oplus$ denotes the arrow (in the homotopy category {\sf hTop}) inducing direct sum of vector bundles. 
\end{theorem}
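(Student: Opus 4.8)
The plan is to deduce the statement from the general h-principle for $\Gamma_q$-structures on open manifolds, of which it is essentially the homotopy-theoretic shadow. Recall that $B\Gamma_q$ is the classifying space for codimension-$q$ Haefliger structures: a homotopy class of maps $V\to B\Gamma_q$ is the same as a concordance class of $\Gamma_q$-structures on $V$, and the differential homomorphism $\d:\Gamma_q\to\GL_q$ induces on classifying spaces precisely the map $\nu$ that sends a Haefliger structure to its normal bundle. A genuine codimension-$q$ foliation $\F$ determines a $\Gamma_q$-structure whose normal bundle is exactly $TV/T\F$; the content of Haefliger's theory \cite{H} is that, on open manifolds, one can go back. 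With this dictionary the dashed lift in the diagram is precisely a \emph{formal foliation} refining $\D_0$: a Haefliger structure $\mathcal{H}:V\to B\Gamma_q$ together with the retained tangential datum $\xi:V\to B\GL_{n-q}$, such that $\nu\circ\mathcal{H}$ represents $TV/\D_0$ and the Whitney sum $\oplus$ recovers $\tau$.

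First I would dispatch the easy (``only if'') direction. A homotopy $\D_t$ of codimension-$q$ distributions is a homotopy of sections of $\Gr_{n-q}(TV)$, hence induces a homotopy of the pairs of classifying maps $\xi_t\times\alpha_t$ covering the fixed map $\tau$. At $t=1$ the distribution is integrable, $\D_1=T\F$, and the associated $\Gamma_q$-structure $\mathcal{H}_\F$ provides a lift of $\alpha_1$ through $\nu$; transporting it back along the homotopy produces the desired lift over $\D_0$. The identity $\nu\circ\mathcal{H}_\F=\alpha_1$ together with the splitting $TV\cong T\F\oplus TV/T\F$ furnish commutativity of the diagram.

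The substance is the converse. Given the lift, i.e. a formal foliation refining $\D_0$, I would promote it to an honest foliation by invoking the h-principle for the sheaf of germs of codimension-$q$ foliations on the open manifold $V$, suitably encoded through $\Gamma_q$-structures. This sheaf is $\mathrm{Diff}$-invariant and microflexible, so on an open manifold one is exactly in the situation handled by the compression technique of the Setting section: one first realizes a germ of the sought foliation on a neighbourhood of a core $A\subset V$, with prescribed normal data up to homotopy, and then spreads it over all of $V$ by pulling back along a compression $g_t:V\to U$ into a neighbourhood of the core (as in \cite{G2},\cite{EM}). Tracking the underlying distributions throughout this process yields a path from $\D_0$ to $T\F$ through codimension-$q$ distributions, which is what the theorem demands.

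The hard part, and the one genuinely responsible for the open-manifold hypothesis, is this last integration step: integrability of distributions is a \emph{closed}, not open, condition, so holonomic approximation cannot be applied to distributions directly. The resolution is to work at the level of $\Gamma_q$-structures, where the parametric h-principle does apply, and to combine the microflexibility of the foliation sheaf with the compression into the core to convert the formal (homotopy-theoretic) solution into a true one. The remaining delicate bookkeeping is to keep the normal-bundle identification $\nu\circ\mathcal{H}\cong TV/\D_0$ coherent along the whole homotopy, so that the deformation stays within distributions of constant corank $q$ and terminates at a bona fide integrable distribution.
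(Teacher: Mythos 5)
First, a point of order: the paper does not prove this statement at all. It is Haefliger's theorem, quoted verbatim from \cite{H} and used as a black box; the paper's own work begins only after it (transporting the almost symplectic form along the homotopy it produces, then invoking Theorem \ref{thm:hprinciple:symp:fol}). So your proposal can only be compared with Haefliger's original argument. Its architecture you recall correctly: encode the dashed lift as a $\Gamma_q$-structure with prescribed normal data, prove the ``only if'' direction by taking the $\Gamma_q$-structure of $\F$ at $t=1$ and transporting it back along the homotopy of classifying maps, and then, for the converse, integrate the formal datum using openness of $V$. Your easy direction is essentially complete and correct.

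The gap is in the converse, in two respects. First, as written it is nearly circular: ``the content of Haefliger's theory \cite{H} is that, on open manifolds, one can go back'' is precisely the statement being proved, so it cannot be invoked as a step. Second, where you do supply an argument, the load-bearing claim --- that the sheaf of germs of codimension-$q$ foliations is microflexible --- is unjustified, and it is exactly the point at issue: integrability is a closed condition (as you yourself note), and sheaves of solutions of closed relations are not microflexible in general; deformations of a foliation near a compact set can instantly create closed leaves or holonomy that no foliation of a larger neighbourhood can accommodate. Haefliger's actual resolution attaches microflexibility to a different sheaf. One represents the $\Gamma_q$-structure, up to homotopy, as a foliated microbundle: a germ, along the zero section of the rank-$q$ normal bundle $\nu\to V$, of a codimension-$q$ foliation on the total space transverse to the fibres. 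A genuine foliation on $V$ inducing the given $\Gamma_q$-structure is then the same thing as a map (a perturbation of the zero section) \emph{transverse} to that germ of foliation, and the epimorphism $TV\to\nu$ provided by the dashed lift is precisely the formal datum required by the Gromov--Phillips transversality theorem. Transversality is an open, $\mathrm{Diff}$-invariant condition, so the sheaf of transverse maps \emph{is} microflexible, hence flexible on the open manifold $V$, and it is to this sheaf that the compression-to-the-core argument of \cite{G2} applies. Your skeleton (formal solution, core, compression) is the right one, but until the h-principle is applied to the sheaf of transverse maps rather than to the sheaf of foliations, the hard implication remains unproved.
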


This result shows that the integrability problem for distributions on open manifolds can be completely reduced to one in obstruction theory. Moreover, it can be shown that the map $\nu:B\Gamma_q\longrightarrow B\GL_q$ is $(q+1)$-connected, hence:

\begin{corollary}
On an open $n$-manifold $V$ with $H^i(V;\mathbb{Z})=0$ for all $i > q+1$, any codimension-$q$ distribution is homotopic to a foliation.
\end{corollary}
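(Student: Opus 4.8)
The plan is to read the statement as a pure lifting/obstruction problem and feed it into the preceding theorem of Haefliger, which reduces the existence of a homotopy from $\D_0$ to a foliation to solvability of the lifting square against $\nu\times 1$. First I would observe that the bottom triangle of that square already commutes up to homotopy: choosing a Riemannian metric splits $TV\cong\D_0\oplus(TV/\D_0)$, so the classifying map $\tau$ of $TV$ is homotopic to $\oplus\circ(\xi\times\alpha)$. Since the vertical arrow $\nu\times 1$ is the identity on the $B\GL_{n-q}$ factor, lifting $\xi\times\alpha$ through it amounts to nothing more than lifting the single map $\alpha:V\to B\GL_q$ classifying the normal bundle $TV/\D_0$ through $\nu:B\Gamma_q\to B\GL_q$.

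Next I would turn $\nu$ into a fibration and analyse its homotopy fiber $F$. The excerpt records that $\nu$ is $(q+1)$-connected; chasing the long exact sequence of the fibration then gives $\pi_i(F)=0$ for all $i\le q$, that is, $F$ is $q$-connected. Now I would run the standard obstruction theory for the existence of a section of the pulled-back fibration $\alpha^{*}(B\Gamma_q)\to V$ over a CW model of $V$. The obstruction to extending a partial section from the $i$-skeleton across the $(i+1)$-cells lives in $H^{i+1}(V;\pi_i(F))$. By $q$-connectivity of $F$ these groups vanish for every $i\le q$, so a section can be built freely over the $(q+1)$-skeleton; all remaining obstructions sit in cohomological degrees $\ge q+2$, which one wants to annihilate using the hypothesis $H^{i}(V;\mathbb{Z})=0$ for $i>q+1$. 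A section then exists, the Haefliger lifting problem is solved, and $\D_0$ is homotopic to a foliation.

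The step I expect to be the main obstacle is the coefficient bookkeeping in the last paragraph: the obstruction groups carry coefficients in the homotopy groups $\pi_i(F)$ of the Haefliger fiber (in general twisted by the action of $\pi_1(V)$), whereas the hypothesis only controls \emph{integral} cohomology, and the delicate borderline degree is precisely $q+2$. To close this gap I would exploit that $V$ is open: it deformation retracts onto a core, and hence has the homotopy type of a CW complex whose structure, together with the vanishing of $H^{i}(V;\mathbb{Z})$ for $i>q+1$, is used to force the vanishing of the surviving twisted obstruction classes in degrees $\ge q+2$. Once this homotopy-dimension input is in place the argument is purely formal, and the genuine mathematical content is entirely the $(q+1)$-connectivity of $\nu$ quoted above.
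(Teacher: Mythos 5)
Your proposal is correct and follows essentially the same route as the paper: the paper deduces the corollary directly from Haefliger's theorem together with the $(q+1)$-connectivity of $\nu:B\Gamma_q\to B\GL_q$, with the standard obstruction-theoretic argument (reduce to lifting $\alpha$ through $\nu$, then kill the obstructions in degrees $\ge q+2$ using the cohomological hypothesis) left implicit, exactly as you spell it out.
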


In order to apply this to our setting, let us spell out what Haefliger's theorem says: given a distribution $\D_0$ such that the diagram above can be solved one can find a path of bundle isomorphisms
\[
\varphi_t:TV\longrightarrow \D_0\oplus (\D_0)^{\perp}
\]
where $\varphi_1$ maps $T\mathcal{F}$ onto $\D_0$ for some codimension-$q$ foliation $\F$. That is, the path
\[
t\mapsto\varphi_t^{\ast}\D_0=:\D_t
\]
ends at $T\mathcal{F}$. If an almost-symplectic $\D_0$-form $\omega_0$ has been provided, one can transport this 2-form along the path $\varphi_t$ by setting
\[
\omega_t:=\varphi_t^{\ast}\omega_0
\]
so that, at all times we have $(\D_t,\omega_t)\in\Delta_q$ and at the end-point $(\D_1,\omega_1)\in\Phi_q$. Now we can use Theorem \ref{thm:hprinciple:symp:fol}
to construct a continuous path starting at $(\D_1,\omega_1)$ and ending at some element in $\Omega_q$. This yields:

\begin{proposition}
\label{prop:main}
Given a distribution $\D_0$ on an open manifold $V$ with a non-degenerate 2-form $\omega_0$, there is a homotopy $(\D_t,\omega_t)\in\Delta_q$ starting at $(\D_0,\omega_0)$ and ending at a $\mathcal{F}_1$-foliated {\em symplectic} form $\omega_1$ if and only if the distribution $\D_0$ is homotopic to a foliation.
\end{proposition}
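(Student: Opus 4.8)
The statement is a biconditional, and I would treat the two implications asymmetrically, since one is essentially formal while the other merely packages together Haefliger's theorem with the h-principle already established in Theorem~\ref{thm:hprinciple:symp:fol}. Indeed, the substantive construction is exactly the one sketched in the paragraph preceding the statement, so my task is to verify that it proves the ``if'' direction and that the ``only if'' direction is automatic.

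The forward implication is immediate. Suppose a homotopy $(\D_t,\omega_t)\in\Delta_q$ is given, starting at $(\D_0,\omega_0)$ and terminating at an $\F_1$-foliated symplectic form $\omega_1$. Discarding the $2$-forms, the assignment $t\mapsto\D_t$ is a path in $\Dist_q(V)$ from $\D_0$ to $\D_1$; and since $\omega_1$ is $\F_1$-symplectic, its underlying distribution $\D_1=T\F_1$ is integrable. Thus $\D_0$ is homotopic, through distributions, to the foliation $\F_1$, which is precisely what is required.

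For the converse I would proceed in three steps. First, I invoke the hypothesis that $\D_0$ is homotopic to a foliation: by Haefliger's theorem this yields a path of bundle isomorphisms $\varphi_t:TV\to\D_0\oplus\D_0^{\perp}$ with $\varphi_0=\id$ and with $\varphi_1$ carrying $T\F$ onto $\D_0$ for some codimension-$q$ foliation $\F$, so that $\D_t:=\varphi_t^{\ast}\D_0$ is a homotopy of distributions ending at $T\F$. Second, I transport the almost-symplectic datum along this path by setting $\omega_t:=\varphi_t^{\ast}\omega_0$; since each $\varphi_t$ restricts to a fibrewise isomorphism $\D_t\to\D_0$, one has $\iota_{\D_t}^{\ast}\omega_t=\varphi_t^{\ast}(\iota_{\D_0}^{\ast}\omega_0)$, whence $(\iota_{\D_t}^{\ast}\omega_t)^n\neq 0$ follows from $(\iota_{\D_0}^{\ast}\omega_0)^n\neq 0$. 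Therefore $(\D_t,\omega_t)\in\Delta_q$ throughout, and the endpoint $(\D_1,\omega_1)\in\Phi_q$ is an almost-symplectic foliation. Third, I apply Theorem~\ref{thm:hprinciple:symp:fol} with base point $(\F,\omega_1)\in\Phi_q$ and any fixed class $\xi\in H^2_{\text{dR}}(V;\Rr)$ to obtain a homotopy inside $\Phi_q$ terminating at a genuine symplectic foliation in $\Omega_q$. Concatenating the two homotopies (and reparametrizing to smooth the join) produces the path in $\Delta_q$ demanded by the statement.

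The only point requiring attention, and the one I would flag as the main obstacle were the earlier results unavailable, is the second step: upgrading a bare homotopy of distributions to a homotopy of the full almost-symplectic pair while keeping leafwise non-degeneracy intact. This is exactly what the isomorphisms $\varphi_t$ accomplish, by letting us drag $\omega_0$ along rather than reconstructing a compatible $2$-form at each time; the covering-homotopy content is mild, as $\varphi_t$ is delivered directly by Haefliger's theorem. All the genuine difficulty has been absorbed into Theorem~\ref{thm:hprinciple:symp:fol}, so the proposition reduces to concatenating the transport homotopy with the h-principle homotopy and checking that the concatenation never leaves $\Delta_q$.
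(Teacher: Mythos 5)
Your proposal is correct and follows essentially the same route as the paper: the ``only if'' direction by forgetting the $2$-forms, and the ``if'' direction by using the path of bundle isomorphisms $\varphi_t$ furnished by Haefliger's theorem to transport $\omega_0$ along $\D_t:=\varphi_t^{\ast}\D_0$, then concatenating with the homotopy from Theorem~\ref{thm:hprinciple:symp:fol}. Your explicit verification that $(\iota_{\D_t}^{\ast}\omega_t)^n\neq 0$ is a welcome detail that the paper merely asserts.
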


Let us recast the previous result in terms of Poisson geometry. Any bivector $\pi\in\Gamma(V,\wedge^2TV)$ defines a bundle map
\[
\pi ^\sharp: T^{\ast}V\longrightarrow TV.
\]
We say that $\pi$ is {\bf regular} of rank $k$ if its image $\D:=\pi^\sharp(T^{\ast}V)$ is a subbundle of $TV$ of rank $k$. Notice that $\pi$ endows its image $\D$ with an almost symplectic structure $\omega\in\Gamma(V,\wedge^2 \D^{\ast})$, defined by
\[
 \omega(\pi^\sharp(\xi),\pi^\sharp(\eta)):=\pi(\xi,\eta).
\]
Conversely, any subbundle $\D\subset TV$ together with a nondegenerate 2-form $\omega\in\Gamma(V,\wedge^2 \D^{\ast})$ defines a regular bivector $\pi$: it is the unique bivector such that the following diagram commutes:
\[
\xymatrix{
 T^{\ast}V \ar[d]^{i^{\ast}} \ar[r]^{\pi^\sharp} & TV\\
 \D^{\ast} \ar[r]^{\omega^{-1}} & \D \ar[u]^{i}
}
\]
where $i:\D \hookrightarrow TV$ denotes the inclusion and $i^{\ast}$ its transpose.

It is well known that a regular bivector $\pi$ is a {\bf Poisson structure} (i.e., $[\pi,\pi]=0$) if and only if $\D$ is the tangent bundle of a foliation $\mathcal{F}$ and the foliated 2-form $\omega$ is $\d_\F$-closed (in other words, it is a foliated symplectic structure). Hence, Proposition \ref{prop:main} is equivalent to

\begin{theorem}
Let $\pi_0$ be a regular bivector on an open manifold $V$. Then there is a path $t\mapsto\pi_t$ of regular bivectors on $V$ starting at $\pi_0$ and ending at a {\em Poisson} bivector $\pi_1$ if and only if the distribution $\D_0=\im\pi_0^\sharp$ is homotopic to an integrable distribution. Moreover, the Poisson structure $\pi_1$ can be choosen so that its leafwise symplectic form extends to a global closed 2-form.
\end{theorem}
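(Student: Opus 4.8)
The plan is to show that the final statement is a direct translation of Proposition~\ref{prop:main} through the dictionary established immediately above it, which sets up the correspondence between regular bivectors of rank $k=n-q$ and pairs $(\D,\omega)$ consisting of a codimension-$q$ distribution $\D$ together with a leafwise non-degenerate 2-form $\omega\in\Gamma(V,\wedge^2\D^{\ast})$. First I would make this correspondence into an honest homeomorphism (or at least a homotopy equivalence on the relevant spaces): given a regular bivector $\pi$, the assignment $\pi\mapsto(\im\pi^\sharp,\omega_\pi)$ where $\omega_\pi(\pi^\sharp\xi,\pi^\sharp\eta):=\pi(\xi,\eta)$ lands in $\Delta_q$, and conversely the commutative diagram with $\omega^{-1}$ recovers $\pi$ uniquely from $(\D,\omega)$. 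I would check that both assignments are continuous in the compact-open topologies and mutually inverse, so that a path of regular bivectors of fixed rank $n-q$ corresponds precisely to a path in $\Delta_q$, and that the two constructions are smooth in the parameter whenever the inputs are.

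The second ingredient is the integrability criterion at the endpoint. Here I would invoke the well-known fact, recalled just before the statement, that a regular bivector $\pi$ is Poisson (i.e. $[\pi,\pi]=0$) exactly when $\D=\im\pi^\sharp$ integrates to a foliation $\F$ and the induced leafwise 2-form $\omega$ is $\d_\F$-closed. In other words, under the dictionary, the Poisson bivectors correspond precisely to the points of $\Omega_q$ (symplectic foliations). Thus the existence of a path of regular bivectors from $\pi_0$ to a Poisson bivector $\pi_1$ is equivalent, via the homeomorphism, to the existence of a path in $\Delta_q$ from $(\D_0,\omega_0)$ to a point of $\Omega_q$.

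With these two translations in hand, the statement follows by quoting Proposition~\ref{prop:main}: such a path in $\Delta_q$ ending in $\Omega_q$ exists if and only if the distribution $\D_0$ is homotopic to a foliation, which is exactly the asserted criterion. The final clause about $\pi_1$ being chosen so that its leafwise symplectic form extends to a global closed 2-form is inherited directly from the conclusion of Theorem~\ref{thm:hprinciple:symp:fol}, part (ii): the $h$-principle construction produces $\omega_1$ represented by a global closed 2-form in a prescribed class $\xi\in H^2_{\mathrm{dR}}(V;\mathbb{R})$, and one simply carries that representative back through the bivector dictionary.

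The main obstacle I anticipate is not conceptual but one of bookkeeping: verifying that the bivector-to-pair correspondence is genuinely bicontinuous and rank-preserving along the \emph{entire} homotopy, so that regularity (constant rank $n-q$) is never lost. The non-degeneracy condition $(\iota_\D^\ast\omega)^n\neq 0$ defining $\Delta_q$ is precisely what guarantees $\omega^{-1}$ exists and hence that $\pi^\sharp$ has constant rank, so the equivalence of the open conditions must be tracked carefully; once that is secured, the theorem is a formal corollary of Proposition~\ref{prop:main} and the Poisson integrability characterization.
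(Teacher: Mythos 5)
Your proposal follows essentially the same route as the paper: the paper likewise obtains this theorem by translating Proposition~\ref{prop:main} through the dictionary between regular bivectors and pairs $(\D,\omega)$, using the fact that regular Poisson bivectors correspond exactly to symplectic foliations, with the final clause about the global closed 2-form inherited from part (ii) of Theorem~\ref{thm:hprinciple:symp:fol}. Your additional attention to bicontinuity of the correspondence is careful bookkeeping of a step the paper treats as immediate, but it is the same argument.
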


\begin{remark}
According to \cite[Corollary 14]{CF}, if the leafwise symplectic form  of a regular Poisson stucture extends to a global closed 2-form, then its leaves are submanifolds of a very special type, called Lie-Dirac submanifolds, and the Poisson manifold is integrable to a smooth symplectic groupoid. See \cite{CF} for more details.
\end{remark}

Also, we have:

\begin{corollary}
On an open $n$-manifold $V$ satisfying $H^i(V;\mathbb{Z})=0$ for all $i > q+1$, any regular codimension-$q$ bivector is homotopic, through regular bivectors, to a Poisson structure.
\end{corollary}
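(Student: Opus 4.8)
The plan is to obtain this statement directly by combining the two main ingredients already in place: the integrability criterion for regular bivectors (Theorem \ref{thm:main}) and the Haefliger-type corollary asserting that, under the stated cohomological hypothesis, every codimension-$q$ distribution on $V$ is homotopic to a foliation. The corollary is thus formal, and the work reduces to checking that the hypotheses match up.

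First I would start from an arbitrary regular codimension-$q$ bivector $\pi_0$ on $V$ and pass to its image distribution $\D_0=\im\pi_0^\sharp$. Since regularity means $\pi_0$ has constant rank $n-q$, the image $\D_0$ is a genuine subbundle of $TV$ of corank $q$, hence a legitimate codimension-$q$ distribution and a valid input for Haefliger's machinery. The associated nondegenerate leafwise $2$-form $\omega_0\in\Gamma(V,\wedge^2\D_0^\ast)$ determined by $\pi_0$ travels along with $\D_0$, so $(\D_0,\omega_0)\in\Delta_q$.

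Next I would invoke the foregoing Corollary to Haefliger's theorem, whose hypothesis is precisely the vanishing $H^i(V;\mathbb{Z})=0$ for $i>q+1$ assumed here. This produces a homotopy of distributions carrying $\D_0$ to an integrable distribution $T\F$. The reason the threshold $q+1$ is exactly what is needed: the lifting problem in Haefliger's diagram reduces to lifting the classifying map $\alpha\colon V\to B\GL_q$ of the conormal bundle through $\nu\colon B\Gamma_q\to B\GL_q$, and $\nu$ is $(q+1)$-connected; its homotopy fibre is therefore $q$-connected, so the primary obstructions to a lift live in groups $H^{i+1}(V;\pi_i(\mathrm{fib}\,\nu))$ with $i\geq q+1$, that is, in cohomology of degree $\geq q+2$, all of which vanish by hypothesis.

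Finally, having verified that $\D_0$ is homotopic to an integrable distribution, I would feed this into Theorem \ref{thm:main}, which states exactly that $\pi_0$ is homotopic through regular bivectors to a Poisson structure \emph{if and only if} $\D_0$ is homotopic to an integrable distribution. The desired homotopy then exists, completing the argument. I expect no genuine obstacle in this corollary itself, as all of the analytic and topological difficulty has already been absorbed into the $h$-principle (Theorem \ref{thm:hprinciple:symp:fol}) and into Haefliger's obstruction theory; the only point demanding a moment's care is the bookkeeping of conventions, namely confirming that the image distribution of a rank-$(n-q)$ bivector has codimension exactly $q$ and that the connectivity of $\nu$ is aligned with the cohomological degree so that every obstruction group is indeed killed by the hypothesis.
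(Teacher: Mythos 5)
Your proposal is correct and is exactly the paper's (implicit) argument: combine Theorem \ref{thm:main} with the earlier corollary of Haefliger's theorem, which under the hypothesis $H^i(V;\mathbb{Z})=0$ for $i>q+1$ guarantees that the codimension-$q$ distribution $\D_0=\im\pi_0^\sharp$ is homotopic to a foliation. Your additional remark reducing the lifting problem to the map $\alpha\colon V\to B\GL_q$ through the $(q+1)$-connected map $\nu\colon B\Gamma_q\to B\GL_q$, with obstructions in degrees $\geq q+2$, matches the paper's justification of that corollary as well.
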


\section{An Example}
\label{sec:example}

Our main result states that the integrability of a regular bivector is controlled by the integrability of the underlying distribution. However, it could still be the case that the existence of a non-degenerate 2-form on this distribution would force its integrability.  In other words,
one might wonder whether we are any closer to solving the obstruction problem pertaining to integrability of subbundles if we have already solved that of providing an almost symplectic structure (which amounts to lifting the classifying map of the bundle to $B$Sp). In this last section we modify a classical example of Bott to construct an example of a regular bivector on an open manifold which is not homotopic to a regular Poisson structure, showing that not all the integrability obstructions are encoded in the sympletic ones.

Let $E$ be a vector bundle over the manifold $V$ and denote by $\Pont(E)\subset H^{\bullet}_{\text{dR}}(V;\mathbb{R})$ the Pontryagin ring of $E$. Bott's obstruction to integrability of distributions can be stated as follows:

\begin{theorem}[Bott \cite{Bt}]
If a codimension-$q$ distribution $\D$ on $V$ is homotopic to an involutive distribution $T\F$, then its normal bundle $\nu(\D)$ satisfies:
\[
 \Pont^k(\nu(\D))=0,\quad \text{for }k>2q.
\]
\end{theorem}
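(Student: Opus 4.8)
The plan is to reduce the statement to the classical Bott vanishing theorem for foliations and then prove the latter via the Bott connection together with Chern--Weil theory. First I would observe that the hypothesis is really a statement about the isomorphism class of the normal bundle. If $\D$ is homotopic, through distributions of the fixed corank $q$, to an involutive distribution $T\F$, then the tautological quotients $TV/\D_t$ assemble into a rank-$q$ vector bundle over $V\times[0,1]$; restricting to the endpoints and invoking homotopy invariance of pullback bundles yields $\nu(\D)\cong\nu(T\F)$. Since real Pontryagin classes are isomorphism invariants, $\Pont^k(\nu(\D))=\Pont^k(\nu(T\F))$, so it suffices to prove the vanishing for the normal bundle of the foliation $\F$ itself.

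The heart of the argument is the construction of the \textbf{Bott connection}. On $\nu:=TV/T\F$ I would define a partial connection along the leaves by
\[
\nabla_X\bar s:=\overline{[X,\tilde s]},\qquad X\in\Gamma(T\F),\ \tilde s\ \text{a lift of}\ \bar s.
\]
Here involutivity of $T\F$ is exactly what makes this well defined (changing the lift by a section of $T\F$ changes the bracket by a section of $T\F$, which dies in the quotient) and what makes it flat along the leaves (the Jacobi identity gives $R(X,Y)=0$ for $X,Y\in\Gamma(T\F)$, using that $[X,Y]\in\Gamma(T\F)$). I would then extend this partial connection arbitrarily to a genuine connection $\nabla$ on $\nu$ over all of $V$. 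Its curvature $R\in\Gamma(\wedge^2T^{\ast}V\otimes\End(\nu))$ still vanishes on $\wedge^2 T\F$, so each matrix entry of $R$ is a $2$-form annihilating $T\F\times T\F$; equivalently, every entry lies in the ideal generated by the conormal bundle $N^{\ast}:=\mathrm{Ann}(T\F)$, which has rank $q$. The decisive point to keep straight is that one gets \emph{one} conormal leg per curvature factor, not two: flatness only controls $R$ when \emph{both} arguments are tangent to the leaves.

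Finally I would run Chern--Weil theory. The real Pontryagin ring of $\nu$ is generated by classes represented by $\mathrm{Ad}$-invariant polynomials in the curvature $R$; a polynomial of degree $d$ produces a closed $2d$-form lying in the $d$-th power of the conormal ideal, hence carrying at least $d$ conormal legs. Since $N^{\ast}$ has rank $q$, any such form vanishes identically once $d>q$. As the degree-$d$ Chern--Weil form sits in cohomological degree $2d$, this forces every element of $\Pont^{\bullet}(\nu)$ in degree $>2q$ to vanish, which is the claim. The main obstacle, and the only genuinely delicate step, is the curvature analysis of the extended Bott connection: getting the bookkeeping right so that each factor contributes a single conormal direction---yielding the sharp bound $2q$ rather than the naive $q$---rests on using involutivity precisely, and no more than, to kill $R$ on $\wedge^2 T\F$.
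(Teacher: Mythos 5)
Your proof is correct and is essentially the classical argument the paper relies on: the paper states this result with a citation to Bott's lectures rather than proving it, and your proposal reproduces Bott's proof — reduce to the involutive case via homotopy invariance of the quotient bundle $TV/\D_t$, then build the Bott connection on $\nu(T\F)$, extend it arbitrarily, and run Chern--Weil theory using that each curvature entry carries at least one conormal leg, so that forms in the $d$-th power of the conormal ideal vanish for $d>q$. The bookkeeping you flag as the delicate point (one conormal leg per curvature factor, giving the bound $2q$) is exactly right.
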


Recall also that the Pontryagin classes $p_i(E)$ of a (real) vector bundle $E$ are related to the Chern classes $c_i(E\otimes\Cc)$ of its complexification $E\otimes\Cc$ by:
\[
 p_i(E)=(-1)^ic_{2i}(E\otimes\mathbb{C}).
\]
On the other hand, the Euler class and the top Chern class of a complex vector bundle coincide, so the square of the Euler class $e(E)^2=e(E\otimes\mathbb{C})$, lies in the Pontryagin ring of $E$.

In order to construct our example, we start with the trivial complex vector bundle $\underline{\mathbb{C}^{2n}}=\mathbb{C}P^{2n-1}\times\mathbb{C}^{2n}$ over $\mathbb{C}P^{2n-1}$ and split it holomorphically as the sum of the tautological line bundle $\O(-1)$ and its orthogonal :
\[ \underline{\mathbb{C}^{2n}}=\O(-1)\oplus \O(-1)^{\perp},\]
where:
\begin{align*}
\O(-1)&=\{(x,v)\in\mathbb{C}P^{2n-1}\times\mathbb{C}^{2n} : v\in x\},\\
\O(-1)^{\perp}&=\{(x,v)\in\mathbb{C}P^{2n-1}\times\mathbb{C}^{2n} : v \perp x\}.
\end{align*}
We use the usual notation $\O(1)$ for the line bundle dual to $\O(-1)$ and $\O(2)$ for its square $\O(1)\otimes \O(1)$. If we let $\theta_1,...,\theta_{2n}$ be a basis of $(\mathbb{C}^{2n})^{\ast}$ and define on $\mathbb{C}^{2n}$ the non-degenerate 2-form
\[
 \theta=\theta_1\wedge\theta_2+\dots+\theta_{2n-1}\wedge\theta_{2n}.
\]
then we obtain a holomorphic surjection
\[ \theta_{\ast}:\Hom(\O(-1),\O(-1)^{\perp})\longrightarrow \O(2),\quad 
 \theta_{\ast}(\varphi)(\xi)=\theta(\xi,\varphi(\xi)),\]
where $\varphi\in \Hom(\O(-1),\O(-1)^{\perp})$ and $\xi\in \O(-1)$. 

Now, under the well-known isomorphism $T\mathbb{C}P^{2n-1}\simeq \Hom(\O(-1),\O(-1)^{\perp})$ we see that $\D:=\ker(\theta_{\ast})$ is a holomorphic subbundle of (complex) codimension one. If $\nu(\D)$ denotes its normal bundle, then by multiplicativity of the Euler class, one has
\begin{align*}
0\neq 2n=e(T\mathbb{C}P^{2n-1})=e(\D)e(\nu(\D)),
\end{align*}
so that $0 \neq e(\nu(\D))\in H^{2}(\mathbb{C}P^{2n-1};\mathbb{R})$. Thus $e^4(\nu(\D))=e(\nu(\D))^2e(\nu(\D))^2\in \Pont^{8}(\nu(\D))$ as we pointed out above. Since the real cohomology ring of $\mathbb{C}P^{2n-1}$ is the truncated polynomial ring $\mathbb{R}[t]/t^{2n}\mathbb{R}[t]$, where $t$ has degree $2$, choosing $n > 2$ guarantees that $e(\nu(\D))^4\neq 0$. By Bott's result, we conclude that $\D$ is a (real) codimension-2 distribution which is not homotopic to a foliation.

Finally, we consider the open manifold $V:=\mathbb{C}P^{2n-1}\times\mathbb{C}$. The projection in the first factor $p:V\to\mathbb{C}P^{2n-1}$ gives an injection
\[
 p^{\ast}:H^{\bullet}_{\text{dR}}(\mathbb{C}P^{2n-1};\mathbb{R})\to H^{\bullet}_{\text{dR}}(V;\mathbb{R}),
\]
which maps the Euler class of $\nu(\D)$ to that of $p^{\ast}\nu(\D)$, which is the normal bundle to the holomorphic codimension-1 subbundle $\widetilde{\D}:=p^{\ast}\D\oplus\Cc\subset TV$. This means that $\widetilde{\D}$ also cannot be homotopic to a foliation in $V$. A metric on $\widetilde{\D}$, together with its complex structure, yields a non-degenerate 2-form on $\widetilde{\D}$. Hence, this gives an example of a regular bivector on an open manifold which is not homotopic, through regular bivectors, to a Poisson structure.
\vskip 15 pt

\textbf{Acknowledgements.} After reading a preliminary version of this paper, David Martinez Torres has pointed out that our approach can be used to prove foliated versions of the h-principle for conformal symplectic structures, contact structures and, more generally, Jacobi structures (one needs an analogue of Theorem \ref{thm:approx} concerning approximation by $\d_\theta$-closed forms, where $\d_\theta$ denotes the twisted differential). We would like to thank him as well as an anonymous referee for many comments that helped improving this manuscript.

\end{document}